\definecolor{darkblue}{rgb}{0.0, 0.0, 0.6}
\newtheoremstyle{plain}
  {3mm}                         
  {3mm}                         
  {\slshape}                    
  {}                            
  {\bfseries}                   
  {.}                           
  {.5em}                        
  {}                            
\theoremstyle{plain}
\newtheorem{thm}{Theorem}[section]
\newtheorem{lem}[thm]{Lemma}
\theoremstyle{definition}
\newtheorem{defn}[thm]{Definition}
\newtheorem{rmk}[thm]{Remark}
\newcommand{\bbN}{\mathbb{N}}
\newcommand{\calB}{\mathcal{B}}
\newcommand{\calJ}{\mathcal{J}}
\newcommand{\calK}{\mathcal{K}}
\newcommand{\calL}{\mathcal{L}}
\newcommand{\calT}{\mathcal{T}}
\newcommand{\la}{\langle}
\newcommand{\ra}{\rangle}
\newcommand{\Pf}{\mathcal{P}_{\!f}}
\newcommand{\setfunc}[2]{\hbox{${}^{\hbox{$#1$}}{\hskip -2 pt #2}$}}
\newcommand{\thmref}[1]{\hyperref[thm:#1]{Theorem~\ref*{thm:#1}}}
\newcommand{\lemref}[1]{\hyperref[lem:#1]{Lemma~\ref*{lem:#1}}}
\newcommand{\secref}[1]{\hyperref[sec:#1]{Section~\ref*{sec:#1}}}
\newcommand{\indicate}[1]{\mathbf{1}_{#1}}
\font\bigmath=cmsy10 scaled \magstep 3
\newcommand{\bigtimes}{\hbox{\bigmath \char'2}}
\renewcommand{\emph}[1]{\textsl{#1}}
\title{A dynamical characterization of $C$ sets}
\author{John H.~Johnson \\
\multicolumn{1}{p{\textwidth}}{%
\centering%
\emph{Department of Mathematics, James Madison University, Harrisonburg, VA 22807, USA}\\
\href{mailto:john.j.jr@gmail.com}{\url{john.j.jr@gmail.com}}
}}
\begin{document}
\maketitle

\begin{abstract}
  Furstenberg, using tools from topological dynamics, defined the notion of a central subset of positive integers, and proved a powerful combinatorial theorem about such sets.
  Using the algebraic structure of the Stone-\v{C}ech compactification, this combinatorial theorem has been generalized and extended to the Central Sets Theorem.
  The algebraic techniques also discovered many sets, which are not central, that satisfy the conclusion of the Central Sets Theorem.
  We call such sets \emph{$C$ sets}.
  Since $C$ sets are defined combinatorially, it is natural to ask if this notion admits a dynamical characterization similar to Furstenberg's original definition of a central set?
  In this paper we give a positive answer to this question by proving a dynamical characterization of $C$ sets.
\end{abstract}

\section{Introduction}
\label{sec:intro}
Furstenberg, in his book connecting dynamical systems with combinatorial number theory, defined the concept of a \emph{central subset of positive integers} \cite[Definition 8.3]{furstenberg:1981} and proved several important properties of such sets, all using notions from topological dynamics.
For instance, whenever a central set is finitely partitioned, at least one cell of the partition contains a central set \cite[Theorem 8.8]{furstenberg:1981}.
Many of the remaining important properties of central sets follows from a powerful combinatorial 
theorem \cite[Proposition 8.21]{furstenberg:1981} also due to Furstenberg.

Inspired by the fruitful interaction between Ramsey Theory and ultrafilters on semigroups, Bergelson and Hindman, with the assistance of B. Weiss, later proved an algebraic characterization of central sets in $\bbN$ \cite[Section 6]{bergelson:1990}.
Using this algebraic characterization as a definition enabled them to easily extend the notion of a central set to any semigroup.
The definition in \cite{furstenberg:1981} also extends naturally to arbitrary semigroups, and the algebraic and dynamical characterizations were proved to be equivalent in general by H. Shi and H. Yang in \cite{shiyang:1996}.

This algebraic definition turns out to have several advantages over the original dynamical definition.
For instance, the fact that central sets are `preserved under finite partitions' (this is a concise way of stating \cite[Theorem 8.8]{furstenberg:1981}) easily follows from the algebraic definition.
More importantly, the combinatorial result \cite[Proposition 8.21]{furstenberg:1981}---and stronger combinatorial statements about central sets---follow from a relatively simple recursive construction.

As an example, we state (currently) the strongest combinatorial theorem about central sets commonly used.
We first state this theorem for commutative semigroups.
(In \secref{csets} of this paper, we shall give the simplest statement of the Central Sets Theorem currently known for arbitrary semigroups.
The statement of this `general' version of the Central Sets Theorem is necessarily complicated because of noncommutativity.)

In the statement of this theorem, and in the remainder of this paper, we let $\Pf(X)$ denote the collection of all nonempty finite subsets of a set $X$; let $\setfunc{A}{B}$ denote the collection of all functions with domain $A$ and codomain $B$; and, for typographical convenience, we let $\calT = \setfunc{\bbN}{S}$ for a given set $S$.
(Generally, the set $S$ in question will be clear from context.)

\begin{thm}[Central Sets Theorem]
  \label{thm:ccst}
  Let $(S, +)$ be a commutative semigroup and $A \subseteq S$ central.
  Then there exist functions $\alpha \colon \Pf(\calT) \to S$ and $H \colon \Pf(\calT) \to \Pf(\bbN)$ that satisfy the following two statements:
  \begin{itemize}
  \item[(1)]
    If $F$, $G \in \Pf(\calT)$ and $F \subsetneq G$, then $\max H(F) < \min H(G)$.
  \item[(2)]
    Whenever $m \in \bbN$, $G_1$, $G_2$, \ldots, $G_m$ is a finite sequence in $\Pf(\calT)$ with $G_1 \subsetneq G_2 \subsetneq \cdots \subsetneq G_m$ and for each $i \in \{1, 2, \ldots, m\}$, $f_i \in G_i$, then we have
    \[
      \sum_{i=1}^m \Bigl( \alpha(G_i) + \sum_{t \in H(G_i)} f_i \Bigr) \in A.
    \]
  \end{itemize}
\end{thm}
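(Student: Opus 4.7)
The plan is to invoke the algebraic characterization of central sets: $A$ is central precisely when $A \in p$ for some minimal idempotent $p$ in $(\beta S, +)$. Fixing such a $p$ and writing $-x + A = \{y \in S : x + y \in A\}$, set $A^{\star} = \{x \in A : -x + A \in p\}$. A standard computation gives $A^{\star} \in p$ and $-x + A^{\star} \in p$ for every $x \in A^{\star}$. I would strengthen conclusion~(2) so that the chain sum lies in $A^{\star}$ rather than merely in $A$; this extra slack is what keeps the recursion self-sustaining.

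Next, I would construct $\alpha$ and $H$ by recursion on $n = |F|$, processing the members of $\Pf(\calT)$ in non-decreasing order of cardinality. Suppose $\alpha(F')$ and $H(F')$ have been defined whenever $F' \subsetneq F$, satisfying~(1) and the strengthened~(2) for every chain $G_1 \subsetneq \cdots \subsetneq G_m$ with $G_m \subsetneq F$. Let $B \subseteq A^{\star}$ be the finite collection of all such chain sums, and set $D := A^{\star} \cap \bigcap_{b \in B}(-b + A^{\star})$, which still lies in $p$. Put $N := \max\{\max H(F') : F' \subsetneq F\}$. The induction step then reduces to finding $\alpha(F) \in S$ and $H(F) \in \Pf(\bbN)$ with $\min H(F) > N$ and $\alpha(F) + \sum_{t \in H(F)} f(t) \in D$ for every $f \in F$: intersection with $A^{\star}$ handles the $m=1$ case, while intersection with $-b + A^{\star}$ ensures that prepending a previous chain with sum $b$ keeps the extended sum in $A^{\star}$.

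The heart of the argument, and what I expect to be the main obstacle, is therefore the auxiliary claim that for any idempotent $q \in \beta S$, any $D \in q$, any finite $F \subseteq \calT$, and any $N \in \bbN$, there exist $x \in S$ and $H \in \Pf(\bbN)$ with $\min H > N$ and $x + \sum_{t \in H} f(t) \in D$ for every $f \in F$. Commutativity of $S$ is essential here. The natural strategy is to lift $q$ to an idempotent $\tilde{q}$ in the Stone-\v{C}ech compactification of a larger semigroup built from $\calT$---for example, $\bigtimes_{f \in F} S$---arranged so that each coordinate projection of $\tilde{q}$ recovers $q$. The diagonal image of $\bbN$ under $t \mapsto (f(t))_{f \in F}$ then interacts with the idempotent structure to produce a simultaneous FP-type substructure, from which $H(F)$ can be extracted; a final translate by an appropriate $\alpha(F) \in S$ places every required sum inside $D$.

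Once the auxiliary claim is established, the recursion proceeds without further difficulty, producing global $\alpha$ and $H$ satisfying~(1) and the strengthened form of~(2). Since $A^{\star} \subseteq A$, the conclusion~(2) as stated follows immediately.
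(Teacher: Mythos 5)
Your overall architecture --- pass to a minimal idempotent $p$ with $A \in p$, work with $A^{\star} = \{x \in A : -x + A \in p\}$ and the fact that $-x + A^{\star} \in p$ for $x \in A^{\star}$, and run a recursion on $|F|$ whose induction step reduces to finding $\alpha(F)$ and $H(F)$ with $\min H(F) > N$ and $\alpha(F) + \sum_{t \in H(F)} f(t) \in D$ for all $f \in F$ --- is exactly the skeleton of the De--Hindman--Strauss argument that the paper cites for this theorem, and that part of your proposal is sound. The gap is your auxiliary claim. As stated, for an \emph{arbitrary} idempotent $q \in \beta S$ and arbitrary $D \in q$, it is false. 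Counterexample in $(\mathbb{N},+)$: let $D = FS(\langle 4^n \rangle_{n \ge 1})$, which lies in an idempotent by the Galvin--Glazer proof of Hindman's theorem, and let $F = \{f_1, f_2, f_3\}$ with $f_i(t) = it$. Your claim would produce $a$ and $H$ with $a + id \in D$ for $i = 1,2,3$, where $d = \sum_{t \in H} t \ge 1$, i.e.\ a nontrivial $3$-term arithmetic progression in $D$. But elements of $D$ have base-$4$ digits in $\{0,1\}$, so if $x + z = 2y$ with $x, y, z \in D$, the digitwise sums $x_i + z_i \le 2$ and $2y_i \le 2$ involve no carries, whence $x_i + z_i = 2y_i$ forces $x_i = z_i = y_i$ for every $i$, so $x = y = z$ and $d = 0$. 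Membership in an arbitrary idempotent therefore does not suffice, and indeed if your claim were true the theorem would hold for all IP sets, which it does not.

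The correct auxiliary claim is that every member of a \emph{minimal} idempotent has this property (in the paper's terminology, is a $J$ set), and this is precisely the one place where the hypothesis that $A$ is central, rather than merely IP, must enter; your sketch never invokes minimality, so it cannot close the gap. The standard argument (Furstenberg--Katznelson; \cite[Theorem 2.2]{de:2008}) runs as follows: with $k = |F|$, work in $(\beta S)^k$; let
\[
I = \Bigl\{ \bigl(a + \textstyle\sum_{t \in H} f_1(t), \ldots, a + \sum_{t \in H} f_k(t)\bigr) : a \in S,\ H \in \Pf(\bbN),\ \min H > N \Bigr\}
\]
and $E = I \cup \{(a, \ldots, a) : a \in S\}$. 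Then $\overline{E}$ is a compact subsemigroup of $(\beta S)^k$ containing $\widetilde{p} = (p, \ldots, p)$, and $\overline{I}$ is an ideal of $\overline{E}$; since $p$ is minimal in $\beta S$, $\widetilde{p}$ is minimal in $(\beta S)^k$, hence lies in $K(\overline{E}) \subseteq \overline{I}$, and intersecting with the neighborhood $\overline{D} \times \cdots \times \overline{D}$ of $\widetilde{p}$ yields the required $\alpha(F)$ and $H(F)$. Your ``lift $q$ to the compactification of a product semigroup and use the diagonal'' idea points in this direction, but without the minimality-and-ideal step it yields only an FP-structure in each coordinate, which is not enough. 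With the auxiliary claim restricted to minimal idempotents and proved this way, the rest of your recursion goes through as written.
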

\begin{proof}
  This was proved by De, Hindman, and Strauss in \cite[Theorem 2.2]{de:2008}.
\end{proof}
\begin{rmk}
  It's an accident of history that \cite[Proposition 8.21]{furstenberg:1981} is also known in the literature as the Central Sets Theorem.
  Depending on how one counts, there are about four different versions of `the' Central Sets Theorem.
  (Happily each newer version implies or easily reduces to the previous version.)
  From this point on in this paper we shall only refer to \thmref{ccst} and \thmref{cst} as the Central Sets Theorem.
\end{rmk}

We shall call sets that satisfy the conclusion of the Central Sets Theorem \emph{$C$ sets}.
Despite the combinatorial power (and its name), the Central Sets Theorem is \emph{not} strong enough to combinatorially characterize central sets.
In short there are $C$ sets which are not central sets.

This somewhat surprising situation was first discovered in the context of an important type of $C$ sets called the \emph{quasi-central sets}.
The quasi-central sets were first defined algebraically \cite[Definition 1.2]{hindman:1996} and given a combinatorially characterization \cite[Theorem 3.7]{hindman:1996} in a paper of Hindman, Maleki, and Strauss.
The fact that quasi-central sets also satisfy the conclusion of the Central Sets Theorem follows from the proof of \cite[Theorem 2.2]{de:2008}.

Since quasi-central sets are defined algebraically, it is natural to wonder if this notion admits a dynamical characterization similar to Furstenberg's original definition of central sets.
In their recent paper, Burns and Hindman prove such a dynamical characterization of quasi-central sets \cite[Theorem 3.4]{burns:2007}.

However, their paper didn't provide a dynamical characterization of $C$ sets.
(The fact that the notions of $C$ sets and quasi-central sets are distinct follows from an example constructed in a recent paper \cite{hindman:2009} of Hindman.)
In this paper we fill this lacuna and prove a dynamical characterization of $C$ sets in \thmref{csets}.
This characterization will be a special case of a more general result in \thmref{main-result} that gives a dynamical characterization of members of idempotent ultrafilters in compact subsemigroups of the Stone-\v{C}ech compactification.

\subsection*{Acknowledgements}
The characterization proved here is a generalization of part of the author's dissertation research conducted under the guidance of Neil Hindman.
I want to thank Dr.~Hindman for his excellent advisement and helpful comments on this paper.

\section{Preliminaries on Compact Subsemigroups}
\label{sec:compact}

In this section we state the basic definitions, conventions, and results we need to prove our dynamical characterization of members of certain idempotent ultrafilters.
None of the results and definitions in this section are due to the author.
We also omit any proofs, but we do give references to where proofs can be found.

We start by giving a brief review of the algebraic structure of the Stone-\v{C}ech compactification of a discrete semigroup.

Given a discrete nonempty space $S$ we take the points of $\beta S$ to be the collection of all ultrafilters on $S$.
We identify points of $S$ with the principal ultrafilters in $\beta S$.
(Thus we pretend that $S \subseteq \beta S$.)
Given $A \subseteq S$, put $\overline{A} = \{ p \in \beta S : A \in p\}$.
Then the collection $\{ \overline{A} : A \subseteq S \}$ is a basis for a compact Hausdorff topology on $\beta S$.
This topology is the Stone-\v{C}ech compactification of the discrete space $S$.
The proofs for all of these assertions can be found in \cite[Sections 3.2 and 3.3]{hindman:1998}.

Given a discrete semigroup $(S, \cdot)$, we can extend the semigroup operation to $\beta S$ \cite[Theorem 4.1]{hindman:1998} such that for $p$, $q \in \beta S$ and $A \subseteq S$, we have $A \in p \cdot q$ if and only if $\{x \in S : x^{-1}A \in q \} \in p$ \cite[Theorem 4.12]{hindman:1998} where $x^{-1}A = \{ y \in S : xy \in A \}$.
With this operation, $(\beta S, \cdot)$ becomes a compact Hausdorff right-topological semigroup.
The word `right-topological' means that for every $q \in \beta S$ the function $\rho_q \colon \beta S \to \beta S$, defined by $\rho_q(p) = p \cdot q$, is continuous.

\begin{defn}
  Let $S$ be a nonempty discrete space and $\calK$ a filter on $S$.
  \begin{itemize}
  \item[(a)]
    $\overline{\calK} = \{p \in \beta S : \calK \subseteq p \}$.

  \item[(b)]
    $\calL(\calK) = \{ A \subseteq S : S \setminus A \not\in \calK \}$.
  \end{itemize}
\end{defn}

As is well known, the function $\calK \mapsto \overline{\calK}$ is a bijection from the collection of all filters on $S$ onto the collection of all compact subspaces of $\beta S$ \cite[Theorem 3.20]{hindman:1998}.
We also have the following important theorem relating the above two concepts.

\begin{thm}
  \label{thm:uf-extend}
  Let $S$ be a nonempty discrete space and $\calK$ a filter on $S$.
  \begin{itemize}
  \item[(a)]
    $\overline{\calK} = \{ p \in \beta S : \mbox{$A \in \calL(\calK)$ for all $A \in p$} \}$.

  \item[(b)]
    Let $\calB \subseteq \calL(\calK)$ be closed under finite intersections.
    Then there exists a $p \in \beta S$ with $\calB \subseteq p \subseteq \calL(\calK)$.
  \end{itemize}
\end{thm}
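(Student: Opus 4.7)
The plan is to derive both parts directly from the definitions combined with the ultrafilter dichotomy ($A \in p$ if and only if $S \setminus A \notin p$), which holds because $p$ is an ultrafilter.

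For part (a), I would prove both inclusions by a short contradiction argument. If $p \in \overline{\calK}$ and $A \in p$, then $S \setminus A \notin p$; since $\calK \subseteq p$, this forces $S \setminus A \notin \calK$, i.e., $A \in \calL(\calK)$. Conversely, suppose every $A \in p$ lies in $\calL(\calK)$ and fix $B \in \calK$. If $B \notin p$, then $S \setminus B \in p$ by the ultrafilter property, so by hypothesis $S \setminus B \in \calL(\calK)$; unwinding the definition yields $B \notin \calK$, a contradiction.

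For part (b), my strategy is to extend $\calB \cup \calK$ to an ultrafilter $p$. Once this is done, $\calK \subseteq p$ gives $p \in \overline{\calK}$, and part (a) immediately delivers $p \subseteq \calL(\calK)$. The only real work is verifying the finite intersection property of $\calB \cup \calK$. Given $B_1, \ldots, B_n \in \calB$ and $K_1, \ldots, K_m \in \calK$, set $B = B_1 \cap \cdots \cap B_n$ and $K = K_1 \cap \cdots \cap K_m$. Closure of $\calB$ under finite intersections puts $B \in \calB \subseteq \calL(\calK)$, while $K \in \calK$ because $\calK$ is a filter. If $B \cap K$ were empty, then $K \subseteq S \setminus B$, and upward closure of $\calK$ would give $S \setminus B \in \calK$, contradicting $B \in \calL(\calK)$.

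I do not anticipate any genuine obstacle; both parts reduce to a careful unravelling of the definitions of $\overline{\calK}$, $\calL(\calK)$, and the ultrafilter property. The subtlest point is in part (b), where the contradiction hinges on simultaneously invoking upward closure of $\calK$ and the defining property of $\calL(\calK)$—but this is a matter of attention rather than difficulty.
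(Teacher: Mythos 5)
Your proof is correct. The paper itself offers no argument here---it simply cites Theorem 3.11 of Hindman and Strauss---so your write-up supplies the standard proof that underlies that citation: part (a) is the ultrafilter dichotomy applied to the definitions of $\overline{\calK}$ and $\calL(\calK)$, and part (b) is the finite-intersection-property extension of $\calB \cup \calK$ to an ultrafilter, with part (a) then giving the inclusion $p \subseteq \calL(\calK)$. The one point worth a half-sentence of extra care in part (b) is the degenerate cases of the finite intersection property (a subfamily drawn entirely from $\calB$ or entirely from $\calK$); these are disposed of by noting that $S \in \calK$ and that no member of $\calL(\calK)$ is empty, so your argument covers them with the choices $K = S$ or $B$ arbitrary. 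Otherwise the proof is complete as written.
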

\begin{proof}
  Both of these assertions follow from \cite[Theorem 3.11]{hindman:1998}.
\end{proof}

If $(S, \cdot)$ is discrete semigroup and $\calK$ a filter on $S$, then there are precise conditions on $\calK$ which guarantee that $\overline{\calK}$ is a compact subsemigroup of $\beta S$ \cite[Theorem 2.6]{davenport:1990}.
Hence $\overline{\calK}$ is a compact Hausdorff right-topological semigroup for a suitable filter $\calK$.

\begin{thm}
  \label{thm:crts}
  Let $T$ be a compact Hausdorff right-topological semigroup.
    \begin{itemize}
  \item[(a)]
    $T$ contains at least one idempotent, that is, there exists $x \in T$ such that $x = x \cdot x$.

  \item[(b)]
    $T$ contains an ideal, called the smallest ideal and denoted as $K(T)$, that is contained in every ideal of $T$.
    Additionally, $K(T)$ also contains at least one idempotent.
  \end{itemize}
\end{thm}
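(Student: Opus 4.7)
My plan is to prove both parts via Zorn's lemma applied to families of closed subsets of $T$, converting minimality into equations using continuity of the right translation $\rho_x$.

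For part (a), consider the family $\mathcal{F}$ of all nonempty closed subsemigroups of $T$, ordered by reverse inclusion. Any chain in $\mathcal{F}$ has nonempty intersection (finite intersection property plus compactness), and this intersection is itself a closed subsemigroup, so Zorn yields a minimal element $M$. Pick any $x \in M$. Then $Mx = \rho_x(M)$ is closed (continuous image of a compact set), is closed under multiplication since $(mx)(m'x) = m(xm')x \in MMx \subseteq Mx$, and lies in $M$ because $x \in M$ and $M$ is a subsemigroup. Minimality of $M$ forces $Mx = M$. Hence $E = \{y \in M : yx = x\}$ is nonempty, closed (preimage of $\{x\}$ under $\rho_x$), and closed under multiplication, so $E \in \mathcal{F}$ with $E \subseteq M$. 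Minimality again gives $E = M$, so $x \in E$, i.e., $x \cdot x = x$.

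For part (b), I would first show that minimal left ideals exist and are automatically closed subsemigroups. For any $x \in T$ the set $Tx = \rho_x(T)$ is a nonempty closed left ideal; applying Zorn to the closed left ideals contained in $Tx$ (chains have nonempty intersection by compactness) yields a minimal closed left ideal $L$. If $L' \subseteq L$ is any left ideal and $y \in L'$, then $Ty$ is a closed left ideal with $Ty \subseteq L' \subseteq L$, so minimality among closed left ideals forces $Ty = L$ and hence $L' = L$; thus $L$ is a genuine minimal left ideal. Moreover $L$ is a subsemigroup, since for $a, b \in L$ we have $ab \in Tb \subseteq TL \subseteq L$. So $L$ itself is a compact Hausdorff right-topological semigroup, and part (a) applied inside $L$ furnishes an idempotent in $L$.

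To finish part (b), define $K(T)$ as the union of all minimal left ideals. It is trivially a left ideal. To see it is a right ideal, I would show that for a minimal left ideal $L$ and any $y \in T$ the set $Ly$ is itself a minimal left ideal: any left ideal $M \subseteq Ly$ pulls back to $N = \{z \in L : zy \in M\}$, a nonempty left ideal contained in $L$, which by minimality of $L$ equals $L$, whence $M = Ly$. Thus $xy \in Ly \subseteq K(T)$ for $x \in L$. For any ideal $I$ and any minimal left ideal $L$, choosing $b \in L$ and $a \in I$ gives $ab \in TL \cap IT \subseteq L \cap I$, so $L \cap I$ is a nonempty left ideal contained in $L$; minimality gives $L = L \cap I \subseteq I$, and therefore $K(T) \subseteq I$. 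The idempotent from the previous paragraph then lies in $K(T)$.

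The main obstacle is not any single clever step but the bookkeeping: at each invocation of minimality one must verify that the candidate subset ($Mx$, $E$, $Ly$, or $L \cap I$) belongs to the right family (closed, closed under multiplication, or a left ideal) and lies inside the minimal object. Continuity of $\rho_x$ supplies all the closedness; associativity supplies the algebraic containments; compactness supplies the nonemptiness of chain intersections. Once these verifications are set up, the two parts unwind in parallel.
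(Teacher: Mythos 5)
Your proof is correct and is essentially the standard argument (Ellis--Namakura via Zorn for the idempotent, and the minimal-left-ideal description of $K(T)$) that the paper itself does not reproduce but simply cites from Hindman and Strauss. All the verifications you flag as bookkeeping do go through as you describe, so nothing further is needed.
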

\begin{proof}
  The proofs of statements (a) and (b) are given in \cite[Theorem 2.5]{hindman:1998} and \cite[Theorem 2.8]{hindman:1998}, respectively.
\end{proof}
\begin{rmk}
  It does follow that $c\ell_{T} K(T)$ is a compact subsemigroup of $T$, and hence by (a) this subsemigroup also contains an idempotent.
  While the smallest ideal $K(T)$ itself may not be closed, it is the union of all of the minimal left ideals of $T$, which are closed, so the fact that it contains an idempotent is also immediate.
\end{rmk}

We are now in a position to give the algebraic definitions of a central set and quasi-central set in a semigroup.

\begin{defn}
  Let $(S, \cdot)$ be a semigroup and $A \subseteq S$.
  \begin{itemize}
  \item[(a)]
    We call $A$ a \emph{central} set if and only if there exists an idempotent $p \in K(\beta S)$ such that $A \in p$.

  \item[(b)]
    We call $A$ a \emph{quasi-central} set if and only if there exists an idempotent $p \in c\ell_{\beta S} K(\beta S)$ such that $A \in p$.
  \end{itemize}
\end{defn}

To finish this section, we give the definition of a dynamical system and relate this notion to the algebraic structure of the Stone-\v{C}ech compactification.

\begin{defn}
  A pair $(X, \la T_s \ra_{s \in S})$ is a \emph{dynamical system} if and only if it satisfies the following four conditions:
  \begin{itemize}
  \item[(1)]
    $X$ is a compact Hausdorff space.
  \item[(2)]
    $S$ is a semigroup.
  \item[(3)]
    $T_s \colon X \to X$ is continuous for every $s \in S$.
  \item[(4)]
    For every $s$, $t \in S$ we have $T_{st} = T_s \circ T_t$.
  \end{itemize}
\end{defn}

\begin{thm}
  \label{thm:dyn-sc}
  Let $(X, \la T_s \ra_{s \in S})$ be a dynamical system.
  Then we can extend $(X, \la T_s \ra_{s \in S})$ to a semigroup action on $\beta S$.
  More precisely, for each $p \in \beta S \setminus S$ we can define $T_p \colon X \to X$ such that for every $q$, $r \in \beta S$, $T_{qr} = T_q \circ T_r$.
  Furthermore, given $p \in \beta S$, $x$ and $y$ in $X$, we have $T_p(x) = y$ if and only if for every neighborhood $U$ of $y$, $\{ s \in S : T_s(x) \in U \} \in p$.
\end{thm}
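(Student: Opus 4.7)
The plan is to define $T_p$ by means of the $p$-limit construction and then verify the composition law by a double density argument on $\beta S$. For each fixed $x \in X$, the function $f_x \colon S \to X$ given by $f_x(s) = T_s(x)$ maps into a compact Hausdorff space, so by the universal property of $\beta S$ it extends uniquely to a continuous function $\widetilde{f}_x \colon \beta S \to X$; I would simply set $T_p(x) = \widetilde{f}_x(p)$. Since a point $y$ of a compact Hausdorff space $X$ equals the value at $p$ of the continuous extension of a map $S \to X$ precisely when, for every neighborhood $U$ of $y$, the pullback $\{s \in S : T_s(x) \in U\}$ belongs to $p$, the final ``furthermore'' clause is immediate, and in particular $T_p(x) = T_p(x)$ is well-defined and agrees with the original $T_s(x)$ when $p = s \in S$.

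The main content is the identity $T_{qr} = T_q \circ T_r$, which I would prove in two stages. First, fix $s \in S$ and $r \in \beta S$, and show $T_{sr}(x) = T_s(T_r(x))$. Both sides are continuous functions of $r \in \beta S$ (the right-hand side because $T_s$ is continuous and $r \mapsto T_r(x) = \widetilde{f}_x(r)$ is continuous; the left-hand side because it is the continuous extension to $\beta S$ of the map $t \mapsto T_{st}(x)$ defined on $S$). They agree on the dense subset $S$ by the given condition $T_{st} = T_s \circ T_t$, hence they agree on all of $\beta S$. Alternatively one can verify this directly from the ultrafilter formula $A \in sr \iff \{t : st \in A\} \in r$ applied to $A = \{u : T_u(x) \in U\}$.

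Second, fix $q, r \in \beta S$ and $x \in X$, set $y = T_r(x)$, and consider the two functions $\beta S \to X$ given by $q \mapsto T_{qr}(x) = \widetilde{f}_x(qr) = \widetilde{f}_x(\rho_r(q))$ and $q \mapsto T_q(y) = \widetilde{f}_y(q)$. The first is continuous because $\rho_r$ is continuous (this is the right-topological hypothesis on $\beta S$ supplied by \cite[Theorem 4.1]{hindman:1998}) and $\widetilde{f}_x$ is continuous; the second is continuous by construction. By the first stage, they agree on $s \in S$ since $T_{sr}(x) = T_s(T_r(x)) = T_s(y) = \widetilde{f}_y(s)$. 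By density of $S$ in $\beta S$ and Hausdorffness of $X$, they agree on all of $\beta S$, which is the desired composition law. The main obstacle is precisely this second step: one must combine the continuity of $\rho_r$ (available because $\beta S$ is right-topological, not left-topological) with the continuity of the individual maps $T_s$ for $s \in S$, and the argument naturally proceeds in two stages rather than one because a single appeal to continuity does not suffice when neither factor in $qr$ is a point of $S$.
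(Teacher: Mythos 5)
Your proof is correct and complete. The paper offers no argument of its own here, deferring entirely to \cite[Theorem 3.27 and Corollary 4.22]{hindman:1998}; your construction --- $T_p(x) = \widetilde{f}_x(p)$, i.e.\ the $p$-limit of $\langle T_s(x) \rangle_{s \in S}$, followed by the two-stage continuity-and-density argument (first $T_{sr} = T_s \circ T_r$ for $s \in S$, then the general case using continuity of $\rho_r$) --- is exactly the standard proof underlying that citation. The one point worth making explicit in your first stage is that left translation $r \mapsto sr$ is continuous on $\beta S$ for $s \in S$, which is what licenses calling $r \mapsto T_{sr}(x)$ ``the continuous extension'' of $t \mapsto T_{st}(x)$ rather than merely a function agreeing with it on $S$; your alternative direct verification via $A \in sr \iff s^{-1}A \in r$ closes this gap in any case.
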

\begin{proof}
  Both of these assertions follow from \cite[Theorems 3.27 and Corollary 4.22]{hindman:1998}.
\end{proof}

Be warned that for $p \in \beta S \setminus S$, $T_p$ is usually not continuous.

\section{Dynamical Characterization of Members of Idempotent Ultrafilters}
\label{sec:main-result}

\begin{defn}
  Let $(X, \la T_s \ra_{s \in S})$ be a dynamical system, $x$ and $y$ points in $X$, and $\calK$ a filter on $S$.
  The pair $(x, y)$ is called \emph{jointly $\calK$-recurrent} if and only if for every neighborhood $U$ of $y$ we have $\{ s \in S : \mbox{$T_s(x) \in U$ and $T_s(y) \in U$} \} \in \calL(\calK)$.
\end{defn}

\begin{lem}
  \label{lem:technical}
  Let $(X, \la T_s \ra_{s \in S})$ be a dynamical system, let $x$ and $y$ be points in $X$, and let $\calK$ be a filter $S$ such that $\overline{\calK}$ is a compact subsemigroup of $\beta S$.
  The following statements are equivalent.
  \begin{itemize}
  \item[(a)]
    The pair $(x, y)$ is jointly $\calK$-recurrent.
    
  \item[(b)]
    There exists $p \in \overline{\calK}$ such that $T_p(x) = y = T_p(y)$.

  \item[(c)]
    There exists an idempotent $p \in \overline{\calK}$ such that $T_p(x) = y = T_p(y)$.
  \end{itemize}
\end{lem}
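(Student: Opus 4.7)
The plan is to prove the equivalence via the cycle (c) $\Rightarrow$ (b) $\Rightarrow$ (a) $\Rightarrow$ (c). The implication (c) $\Rightarrow$ (b) is immediate. For (b) $\Rightarrow$ (a), suppose $p \in \overline{\calK}$ satisfies $T_p(x) = y = T_p(y)$. Given any neighborhood $U$ of $y$, \thmref{dyn-sc} forces both $\{s \in S : T_s(x) \in U\}$ and $\{s \in S : T_s(y) \in U\}$ to lie in $p$, hence so does their intersection. Since $p \in \overline{\calK}$, \thmref{uf-extend}(a) tells us $p \subseteq \calL(\calK)$, so that intersection is a member of $\calL(\calK)$, as required.

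All the real content is in (a) $\Rightarrow$ (c). The plan is to consider the set
\[
  E = \{ p \in \overline{\calK} : T_p(x) = y \text{ and } T_p(y) = y \},
\]
show it is a nonempty compact subsemigroup of $\beta S$, and then invoke \thmref{crts}(a) to extract an idempotent in $E$. The main obstacle---and the only place the joint $\calK$-recurrence hypothesis is actually used---is showing that $E$ is nonempty. For each neighborhood $U$ of $y$, set $B_U = \{s \in S : T_s(x) \in U \text{ and } T_s(y) \in U\}$; by hypothesis each $B_U \in \calL(\calK)$, and the family $\calB = \{B_U : U \text{ a neighborhood of } y\}$ is closed under finite intersections because neighborhoods of $y$ are. \thmref{uf-extend}(b) then yields a $p \in \beta S$ with $\calB \subseteq p \subseteq \calL(\calK)$; \thmref{uf-extend}(a) upgrades this to $p \in \overline{\calK}$, and the membership $B_U \in p$ for every neighborhood $U$ of $y$ implies via \thmref{dyn-sc} that $T_p(x) = y = T_p(y)$, giving $p \in E$.

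The remaining points are routine. For closedness, the identity $\{p \in \beta S : T_p(x) = y\} = \bigcap_{U} \overline{\{s \in S : T_s(x) \in U\}}$ (intersecting over neighborhoods $U$ of $y$) expresses this set as an intersection of basic clopen subsets of $\beta S$, and the analogous identity holds for $\{p : T_p(y) = y\}$; intersecting with the closed set $\overline{\calK}$ shows $E$ is closed, hence compact. For the subsemigroup property, if $p, q \in E$ then $pq \in \overline{\calK}$ because $\overline{\calK}$ is a subsemigroup, and the functoriality $T_{pq} = T_p \circ T_q$ gives $T_{pq}(x) = T_p(T_q(x)) = T_p(y) = y$ with the identical computation for $T_{pq}(y)$. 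Thus $E$ is a nonempty compact right-topological semigroup under the operation inherited from $\beta S$, and \thmref{crts}(a) supplies the idempotent required by (c), completing the cycle.
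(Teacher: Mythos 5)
Your proof is correct and uses essentially the same ideas as the paper's: the filter-closed family $\calB = \{B_U\}$ fed into \thmref{uf-extend}(b) to produce a suitable $p$, and the closed-subsemigroup-plus-\thmref{crts} argument to upgrade to an idempotent. The only difference is organizational---you close the cycle as (c)$\Rightarrow$(b)$\Rightarrow$(a)$\Rightarrow$(c) and spell out (b)$\Rightarrow$(a) explicitly, whereas the paper presents (a)$\Rightarrow$(b)$\Rightarrow$(c) and leaves the remaining implications implicit.
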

\begin{proof}
  \mbox{(a) $\implies$ (b).}
  For each neighborhood $U$ of $y$ put $B_U = \{ s \in S  : \mbox{$T_s(x) \in U$ and $T_s(y) \in U$} \}$.
  Observe that since $B_{U \cap V} = B_U \cap B_V$ for $U$ and $V$ neighborhoods of $y$, we have that the collection $\calB = \{ B_U : \mbox{$U$ is a neighborhood of $y$} \}$ is closed under finite intersections.
  Also, by assumption we have that $\calB \subseteq \calL(\calK)$.
  Hence by \thmref{uf-extend} we can pick $p \in \overline{\calK}$ with $\calB \subseteq p$.

  For every neighborhood $U$ of $y$ we have $B_U \subseteq \{ s \in S : T_s(x) \in U \}$ and $B_U \subseteq \{ s \in S : T_s(y) \in U \}$.
  Therefore $\{ s \in S : T_s(x) \in U \} \in p$ and $\{ s \in S : T_s(y) \in U \} \in p$.
  It now follows from \thmref{dyn-sc} that $T_p(x) = y = T_p(y)$.

  \mbox{(b) $\implies$ (c).}
  Put $M = \{ p \in \overline{\calK} : T_p(x) = y = T_p(y) \}$.
  By \thmref{crts} it suffices to show that $M$ is a compact subsemigroup of $\beta S$.

  $M$ is nonempty by assumption.

  To see that $M$ is compact, we simply show that $M$ is closed.
  Let $p \not\in M$, then either $T_p(x) \ne y$ or $T_p(y) \ne y$.
  First assume that $T_p(x) \ne y$.
  By \thmref{dyn-sc} pick $U$ a neighborhood of $y$ such that $\{ s \in S : T_s(x) \in U \} \not\in p$.
  Put $A = \{ s \in S : T_s(x) \in U \}$ and note that $S \setminus A \in p$.
  We have that $(\overline{S \setminus A}) \cap M = \emptyset$, that is, $\overline{S \setminus A}$ is a (basic) neighborhood of $p$ that misses $M$.
  (If $q \in (\overline{S \setminus A}) \cap M$, then it follows that $A \in q$ and $S \setminus A \in q$, a contradiction.)
  The construction of a (basic) neighborhood of $p$ that misses $M$ when $T_p(y) \ne y$ is similar.
  Therefore $M$ is a closed subset of $\beta S$.

  To see that $M$ is a subsemigroup, let $q$, $r \in M$.
  Then by \thmref{dyn-sc} and assumption we have $ T_{qr}(x) = T_q \circ T_r(x) = T_q(y) = y = T_q \circ T_r(y) = T_{qr}(y)$.
\end{proof}

\begin{thm}[Main Result]
  \label{thm:main-result}
  Let $(S, \cdot)$ be a semigroup, let $\calK$ be a filter on $S$ such that $\overline{\calK}$ is a compact subsemigroup of $\beta S$, and let $A \subseteq S$.
  Then $A$ is a member of an idempotent in $\overline{\calK}$ if and only if there 
exists a dynamical system $(X, \la T_s \ra_{s \in S})$ with points $x$ and $y$ in $X$ and there exists a neighborhood $U$ of $y$ such that the pair $(x,y)$ is jointly $\calK$-recurrent and $A = \{ s \in S : T_s(x) \in U \}$.
\end{thm}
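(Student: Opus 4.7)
For sufficiency, suppose such a dynamical system exists. Joint $\calK$-recurrence of $(x,y)$, together with the implication \mbox{(a) $\implies$ (c)} of \lemref{technical}, gives an idempotent $p \in \overline{\calK}$ with $T_p(x) = y$. Because $U$ is an open neighborhood of $y$, \thmref{dyn-sc} produces $A = \{s \in S : T_s(x) \in U\} \in p$, and we are done.

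For necessity, fix an idempotent $p \in \overline{\calK}$ with $A \in p$. The plan is to realize $A$ as the return-time set of a single coordinate projection on a shift system. Let $S^1$ denote the monoid obtained from $S$ by adjoining an identity $e$ (or $S^1 = S$ if $S$ already has an identity, which we then call $e$). Put $X = \{0,1\}^{S^1}$ with the product topology, and for each $s \in S$ define $T_s \colon X \to X$ by $(T_s f)(t) = f(ts)$. Since $\pi_t \circ T_s = \pi_{ts}$ each $T_s$ is continuous, and $T_{st} = T_s \circ T_t$ is a direct check, so $(X, \la T_s \ra_{s \in S})$ is a dynamical system. Set $x = \indicate{A} \in X$, $y = T_p(x)$, and $U = \{f \in X : f(e) = 1\}$.

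Three facts then have to be verified. First, the return-time identity $\{s \in S : T_s(x) \in U\} = A$ is immediate, since $T_s(x)(e) = x(es) = \indicate{A}(s)$; this is precisely why $e$ was adjoined. Second, $y \in U$: applying \thmref{dyn-sc} coordinate-wise gives $y(r) = 1$ iff $r^{-1}A \in p$, which at $r = e$ reads $A \in p$. Third, one needs $(x,y)$ to be jointly $\calK$-recurrent, and by \lemref{technical} it suffices to show $T_p(y) = y$. This is the technical heart of the argument: writing $A^* = \{r \in S^1 : r^{-1}A \in p\}$ so that $y = \indicate{A^*}$, one verifies from the definitions that $r^{-1}(A^*) = (r^{-1}A)^*$, and then uses the idempotence of $p$ (via $p \cdot p = p$) to conclude $(r^{-1}A)^* \in p$ iff $r^{-1}A \in p$, so that $T_p(y) = \indicate{(A^*)^*} = \indicate{A^*} = y$. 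The only real subtlety is the identity-adjoining trick that allows a single coordinate projection to detect $A$ exactly; without it, coordinate projections would only realize translates $r^{-1}A$ as return-time sets, not $A$ itself.
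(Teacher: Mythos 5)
Your proposal is correct and follows essentially the same route as the paper: the same shift system on $\{0,1\}^{S\cup\{e\}}$ with $x=\indicate{A}$, $y=T_p(x)$, and the coordinate-$e$ neighborhood, combined with \lemref{technical} in both directions. The only difference is your verification of $T_p(y)=y$ via the $A^*$ computation and idempotence of $p$; the paper gets this in one line from \thmref{dyn-sc} as $T_p(y)=T_p\bigl(T_p(x)\bigr)=T_{pp}(x)=T_p(x)=y$, which spares you the ultrafilter bookkeeping.
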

\begin{proof}
  ($\Rightarrow$)
  Let $R = S \cup \{e\}$ be the semigroup with an identity $e$ adjoined to $S$.
  (For expository convenience, we add this new identity even if $S$ already contains an identity.)
  Give $\{0, 1\}$ the discrete topology and give $X = \setfunc{R}{\{0,1\}}$ the product topology.
  Then $X$ is a compact Hausdorff space.

  For each $s \in S$, define $T_s \colon X \to X$ by $T_s(f) = f \circ \rho_s$.
  It's a routine exercise, or see \cite[Theorem 19.14]{hindman:1998}, to show that $(X, \la T_s \ra_{s \in S})$ is a dynamical system.

  Now let $x = \indicate{A}$ be the characteristic function of $A$, pick an idempotent $r \in \overline{\calK}$ with $A \in r$, and put $y = T_r(x)$.
  Then we have that $T_r(y) = T_r\bigl( T_r(x) \bigr) = T_{rr}(x) = T_r(x) = y$.
  Therefore by \lemref{technical} we have that the pair $(x, y)$ is jointly $\calK$-recurrent.

  Put $U = \{ w \in X : w(e) = y(e) \}$ and observe that $U$ is a (subbasic) open neighborhood of $y$.
  (The set $U$ is equal to the inverse image of $\{y(e)\}$ under the projection map.)
  To help us show that $U$ is the neighborhood of $y$ we are looking for, we first will show that $y(e) = 1$.
  Since $y = T_r(x)$ we have that $\{ s \in S : T_s(x) \in U \} \in r$ by \thmref{dyn-sc}.
  We can pick $s \in A$ such that $T_s(x) \in U$.
  Then by definition of $U$ we have that $y(e) = T_s(x)(e) = x \bigl( \rho_s(e) \bigr) = x(es) = x(s)$.
  Also by our choice of $s \in A$ we have $x(s) = \indicate{A}(s) = 1$.

  To finish up this direction observe that for $s \in S$ the following logical relation is true:
  \begin{align*}
    s \in A &\iff \indicate{A}(s) = 1,\\
    &\iff x(s) = 1, \\
    &\iff x(es) = 1, \\
    &\iff (x \circ \rho_s)(e) = 1, \\
    &\iff T_s(x)(e) = 1 = y(e), \\
    &\iff T_s(x) \in U.
  \end{align*}
  Hence $A = \{ s \in S : T_s(x) \in U \}$.

  ($\Leftarrow$)
  Pick a dynamical system $(X, \la T_s \ra_{s \in S})$, pick points $x$ and $y$ in $X$, and pick $U$ a neighborhood of $y$ as guaranteed by assumption.
  By \lemref{technical} pick an idempotent $r \in \overline{\calK}$ such that $T_r(x) = y = T_r(y)$.
  Since $U$ is a neighborhood of $y$ and $T_r(x) = y$, we have $A = \{ s \in S : T_s(x) \in U\} \in r$ by \thmref{dyn-sc}.
\end{proof}
\begin{rmk}
  One remarkable thing about \lemref{technical} and \thmref{main-result} is that, while the results are much more general, the proofs are essentially trivial modifications of the proofs of \cite[Lemma 3.3 and Theorem 3.4]{burns:2007}.
\end{rmk}

\section{A Dynamical Characterization of $C$ sets}
\label{sec:csets}

In this section we give an application of our main result in \secref{main-result} to prove a dynamical characterization of $C$ sets.
We start by giving the combinatorial definition of a $C$ set.
As mentioned in \secref{intro} this combinatorial definition is rather complicated; but we shall soon state an algebraic characterization showing that $C$ sets are members of idempotents in a certain compact subsemigroup.

In the following definition, given an indexed family $\la A_i : i \in I \ra$ of sets, we let $\bigtimes_{i \in I} A_i$ represent its cartesian product.
(We reserve the use of the symbol $\prod$ for our semigroup operation.)
Recall from \secref{intro} that given a set $S$ we let $\calT = \setfunc{\bbN}{S}$ and $\Pf(X)$ is the collection of all nonempty finite subsets of $X$.

\begin{defn}
  Let $(S, \cdot)$ be a semigroup.
  \begin{itemize}
  \item[(a)]
    For each positive integer $m$ put $\calJ_m = \{ (t_1, t_2, \ldots, t_m) \in \bbN^m : t_1 < t_2 < \cdots < t_m \}$.
    
  \item[(b)]
    Given $m \in \bbN$, $a \in S^{m+1}$, $t \in \calJ_m$, and $f \in \calT$, put $x(m, a, t, f) = \prod_{i = 1}^m \bigr( a(i)f(t_i) \bigl)a(m+1)$.
    
  \item[(c)]
    We call a subset $A \subseteq S$ a \emph{$C$ set} if and only if there exist functions $m \colon \Pf(\calT) \to \bbN$, $\alpha \in \bigtimes_{F \in \Pf(\calT)} S^{m(F)+1}$, and $\tau \in \bigtimes_{F \in \Pf(\calT)} \calJ_{m(F)}$ such that the following two statements are satisfied:
    \begin{itemize}
    \item[(1)]
      If $F$, $G \in \Pf(\calT)$ and $F \subsetneq G$, then $\tau(F)\bigl( m(F) \bigr) < \tau(G)(1)$.
      
    \item[(2)]
      Whenever $m \in \bbN$, $G_1$, $G_2$, \ldots, $G_m$ is a finite sequence in $\Pf(\calT)$ with $G_1 \subsetneq G_2 \subsetneq \cdots \subsetneq G_m$, and for each $i \in \{1, 2, \ldots, m\}$, $f_i \in G_i$, then we have
      \[
        \prod_{i=1}^m x(m(G_i), \alpha(G_i), \tau(G_i), f_i) \in A.
      \]
    \end{itemize}
  \end{itemize}
\end{defn}
\begin{rmk}
  This definition of a $C$ set is different from the original (and more complicated) definition given in \cite[Definition 3.3(i)]{de:2008}.
  It is a result in the author's dissertation, to be included in a forthcoming paper \cite{john:2011}, that our simpler definition of a $C$ set is equivalent to the original definition.
\end{rmk}

Before giving an algebraic characterization of $C$ sets, we pause to state the Central Sets Theorem.

\begin{thm}[Central Sets Theorem]
  \label{thm:cst}
  Central sets in a semigroup are $C$ sets.
\end{thm}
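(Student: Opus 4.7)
The plan is to fix an idempotent $p\in K(\beta S)$ with $A\in p$ and to produce the required triple $(m,\alpha,\tau)$ by recursion on a well-ordering of $\Pf(\calT)$ that refines the relation $\subsetneq$. The guiding tool is the standard idempotent trick: for any $B\in p$ the set $B^\star=\{s\in B:s^{-1}B\in p\}$ again lies in $p$, and for every $s\in B^\star$ one has $s^{-1}B^\star\in p$. All elements we extract will be pulled from $A^\star$, so that every partial product formed during the recursion stays inside $A$.

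At stage $F\in\Pf(\calT)$, the inductive hypothesis is that for every chain $G_1\subsetneq\cdots\subsetneq G_{k-1}\subsetneq F$ and every choice of $f_i\in G_i$ the partial product $P=\prod_{i=1}^{k-1}x(m(G_i),\alpha(G_i),\tau(G_i),f_i)$ already lies in $A^\star$. Because $F$ is finite there are only finitely many such chains and only finitely many tuples $(f_i)$, so the intersection $C$ of the finitely many sets $P^{-1}A^\star$ (together with $A^\star$ itself, for the empty-chain case) is a member of $p$. One then picks $\tau(F)(1)$ larger than $\max\{\tau(F')(m(F')):F'\subsetneq F\}$ to secure clause~(1), and is left with the task of choosing $m(F)$, $\alpha(F)$, and the remaining entries of $\tau(F)$ so that $x(m(F),\alpha(F),\tau(F),f)\in C$ holds for every $f\in F$ simultaneously.

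This simultaneous selection is the main obstacle. It is handled by the same alternating construction that appears in the proof of \thmref{ccst} in \cite{de:2008}: take $m(F)=|F|$, enumerate $F$ as $f_1,\ldots,f_{m(F)}$, and then make $2m(F)+1$ alternating choices, picking each interior factor $a_j$ from the $\star$-set of the currently active member of $p$ and selecting increasing time indices $t_j\in\bbN$ at which the appropriate $f(t_j)$'s translate the active set into another member of $p$. The extra right-hand factor $a(m+1)$ built into the definition of $x(m,a,t,f)$ is precisely the device that absorbs noncommutativity and keeps each step inside a translate of $A^\star$, which is why the noncommutative definition requires the ``bookend'' factors that the commutative \thmref{ccst} does not need.

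Rather than redo the entire alternating construction from scratch, the cleanest route---and probably the one the author takes---is to cite the already-established Central Sets Theorem for arbitrary semigroups from \cite{de:2008} (or the forthcoming \cite{john:2011}) applied to the original definition of a $C$ set, and then to invoke the equivalence of that definition with the simpler one used here, as recorded in the preceding remark. With that equivalence in hand, the statement is immediate from the known result, so the subtle combinatorial bookkeeping above need not be reproduced in this paper.
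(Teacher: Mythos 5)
Your final route---citing the general Central Sets Theorem of De, Hindman, and Strauss for the original definition of a $C$ set and then invoking the equivalence with the simpler definition from \cite{john:2011}---is exactly what the paper does, which simply defers to \cite[Corollary 3.10]{de:2008} and \cite{john:2011}. The preliminary sketch of the idempotent recursion is a reasonable outline of how that cited result is actually proved, but it is not needed here and the paper does not reproduce it either.
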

\begin{proof}
  This is proved in \cite{john:2011} using the our definition of a $C$ set, and is proved in \cite[Corollary 3.10]{de:2008} under the original definition.
\end{proof}

To give the algebraic definition of a $C$ set we shall need the following (curiously named) combinatorial notion closely related to $C$ sets.

\begin{defn}
  Let $(S, \cdot)$ be a semigroup.
  \begin{itemize}
  \item[(a)]
    We call a subset $A \subseteq S$ a \emph{$J$ set} if and only if for every $F \in \Pf(\calT)$, there exist $m \in \bbN$, $a \in S^{m+1}$, and $t \in \calJ_m$ such that for all $f \in F$, $x(m, a, t, f) \in A$.
    
  \item[(b)]
    $J(S) = \{ p \in \beta S : \mbox{$A$ is a $J$ set for every $A \in p$} \}$.
  \end{itemize}
\end{defn}
\begin{rmk}
  I must point out that $J$ sets are \emph{not} named after the author!
  The term $J$ set is derived from the term `$J_Y$ set' introduced as \cite[Definition 2.4(b)]{hindman:1996} in a different and earlier paper.
  This definition of a $J$ set is also different from the original (and more complicated) definition given in \cite[Definition 3.3(e)]{de:2008}.
  The fact that these two definitions are equivalent is proved by the author in his dissertation and in the forthcoming paper \cite{john:2011}.
\end{rmk}

\begin{thm}
  \label{thm:jset-filter}
  Let $(S, \cdot)$ be a semigroup and $\calK = \{ A \subseteq S : \mbox{$S \setminus A$ is not a $J$ set} \}$.
  Then $\calK$ is a filter on $S$ with $J(S) = \overline{\calK}$ and $J(S)$ is a compact subsemigroup of $\beta S$.
\end{thm}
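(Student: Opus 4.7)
The plan is to split the theorem into three assertions---that $\calK$ is a filter, that $J(S) = \overline{\calK}$, and that $J(S)$ is a compact subsemigroup---of which the middle one is essentially automatic once $\calK$ is known to be a filter.

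The key bookkeeping observation is that, directly from the definitions, $\calL(\calK) = \{A \subseteq S : A \text{ is a } J \text{ set}\}$: $A \in \calL(\calK)$ iff $S \setminus A \notin \calK$ iff $A$ is a $J$ set. So once $\calK$ is a filter, \thmref{uf-extend}(a) immediately yields $\overline{\calK} = \{p \in \beta S : \text{every } A \in p \text{ is a } J \text{ set}\} = J(S)$, and the compactness of $J(S)$ is then automatic from the general fact that $\overline{\calK}$ is a closed (hence compact) subspace of $\beta S$.

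For the filter axioms on $\calK$, the routine checks are: $S \in \calK$ because $\emptyset$ is vacuously not a $J$ set; $\emptyset \notin \calK$ because $S$ is trivially a $J$ set; and $\calK$ is upward closed because a subset of a non-$J$-set is itself a non-$J$-set (the same witnessing $F \in \Pf(\calT)$ works for both failures, since $x \notin C$ already implies $x \notin D$ whenever $D \subseteq C$). The substantive axiom is closure under pairwise intersection: writing $A_i = S \setminus B_i \in \calK$ with $B_i$ not a $J$ set, we need $B_1 \cup B_2$ to also fail the $J$-set property---equivalently, that $J$ sets are \emph{partition regular}: if $B_1 \cup B_2$ is a $J$ set then $B_1$ or $B_2$ is. I expect this to be the main obstacle of the proof. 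The natural route is a Ramsey/compactness argument combining the witnessing families $F_1$, $F_2$ of hypothesized non-$J$-sets into a single enlarged $F \in \Pf(\calT)$; in the present paper the cleanest approach is probably to invoke the partition regularity of $J$ sets already established in \cite{de:2008} or in the author's dissertation \cite{john:2011}.

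For the subsemigroup claim, suppose $p, q \in J(S)$ and $A \in pq$, so that $B := \{x \in S : x^{-1}A \in q\}$ lies in $p$ and is therefore a $J$ set. Given any $F = \{f_1, \ldots, f_k\} \in \Pf(\calT)$, I would first apply the $J$-set property of $B$ to pick $m_1$, $a^{(1)} \in S^{m_1+1}$, and $t^{(1)} \in \calJ_{m_1}$ with $b_j := x(m_1, a^{(1)}, t^{(1)}, f_j) \in B$ for every $j$. Then $A' := \bigcap_{j=1}^k b_j^{-1}A$ lies in $q$ and is itself a $J$ set. To keep the new time indices above $t^{(1)}_{m_1}$, I would apply the $J$-set property of $A'$ not to $F$ but to the shifted family $\{\tilde f_j\}_{j=1}^k$ defined by $\tilde f_j(n) = f_j(n + t^{(1)}_{m_1})$, obtaining $m_2$, $a^{(2)}$, $s^{(2)}$, and then set $t^{(2)}_i := s^{(2)}_i + t^{(1)}_{m_1}$, so that $t^{(2)}_1 > t^{(1)}_{m_1}$. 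Concatenating the two blocks---taking $m := m_1 + m_2$, $t := (t^{(1)}_1, \ldots, t^{(1)}_{m_1}, t^{(2)}_1, \ldots, t^{(2)}_{m_2}) \in \calJ_m$, and merging the adjacent factors $a^{(1)}(m_1+1)$ and $a^{(2)}(1)$ into a single middle entry $a(m_1+1) := a^{(1)}(m_1+1) \cdot a^{(2)}(1)$---produces $(m, a, t)$ for which $x(m, a, t, f_j) = b_j \cdot x(m_2, a^{(2)}, t^{(2)}, f_j) \in A$, the final membership following from $x(m_2, a^{(2)}, t^{(2)}, f_j) \in A' \subseteq b_j^{-1}A$. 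Hence every $A \in pq$ is a $J$ set, so $pq \in J(S)$ and $J(S)$ is a subsemigroup.
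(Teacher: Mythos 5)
Your proposal is correct and, for most of the theorem, follows the same route as the paper: the easy filter axioms are dismissed as routine, the one substantive axiom (closure of $\calK$ under intersections, i.e.\ partition regularity of $J$ sets) is deferred to the literature exactly as the paper defers it to \cite{john:2011} and \cite[Theorem 2.14]{hindman:2010}, and the identification $J(S) = \overline{\calK}$ comes from the observation $\calL(\calK) = \{A \subseteq S : A \hbox{ is a } J \hbox{ set}\}$ together with \thmref{uf-extend}(a), which is the same Theorem 3.11 of \cite{hindman:1998} the paper cites. The one place you genuinely diverge is the subsemigroup claim: the paper simply cites \cite[Theorem 3.5]{de:2008}, whereas you give a complete concatenation argument, and it checks out. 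Given $A \in pq$, the set $B = \{x : x^{-1}A \in q\}$ is in $p$ and hence a $J$ set; the elements $b_j = x(m_1, a^{(1)}, t^{(1)}, f_j) \in B$ make $A' = \bigcap_j b_j^{-1}A$ a member of $q$ and hence a $J$ set; shifting the $f_j$ by $t^{(1)}_{m_1}$ guarantees $t^{(2)}_1 > t^{(1)}_{m_1}$ so the concatenated tuple lies in $\calJ_{m_1+m_2}$; and merging $a^{(1)}(m_1+1)$ with $a^{(2)}(1)$ makes the product telescope to $b_j \cdot x(m_2, a^{(2)}, t^{(2)}, f_j) \in A$. This buys a self-contained proof of the part of the theorem that is actually provable without the machinery of \cite{de:2008}, at the cost of some notation; note that your argument uses both $p, q \in J(S)$ and so establishes the subsemigroup property as stated, not the stronger ideal property mentioned in passing after the theorem.
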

\begin{proof}
  To show that $\calK$ is nonempty, doesn't contain the empty set, and is closed under supersets is a routine exercise.
  The fact that $\calK$ is closed under finite intersections follows from \cite{john:2011} using the new definition or \cite[Theorem 2.14]{hindman:2010} using the old equivalent definition of $J$ sets.

  Observe that, under the assumption that $\calK$ is a filter, $\calL(\calK) = \{ A \subseteq S : \mbox{$A$ is a $J$ set} \}$.
  Hence it follows from \cite{john:2011} (new definition) or \cite[Theorem 3.11]{hindman:1998} (old equivalent definition) that $J(S) = \overline{\calK}$.

  Finally, the fact that $J(S)$ is a subsemigroup follows from \cite[Theorem 3.5]{de:2008}.
\end{proof}

Since $J(S)$ is a compact subsemigroup, in fact an ideal, of $\beta S$, by \thmref{crts} we have that $J(S)$ contains idempotents.
The following theorem connects these idempotent elements with $C$ sets.

\begin{thm}
  \label{thm:cset-id}
  Let $(S, \cdot)$ be a semigroup and $A \subseteq S$.
  Then $A$ is a $C$ set if and only if there exists an idempotent $p \in J(S)$ such that $A \in p$.
\end{thm}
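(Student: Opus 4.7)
The plan is to prove both implications by exploiting \thmref{jset-filter}: $J(S) = \overline{\calK}$ is a compact subsemigroup of $\beta S$, and its members are exactly the ultrafilters all of whose members are $J$-sets. The two directions are inverses in spirit: a recursion on $\Pf(\calT)$ extracts $C$-set witnesses from a given idempotent, while conversely an idempotent is built inside a compact subsemigroup of $J(S)$ carved out by the ``tails'' of a given $C$-set structure.

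For ($\Leftarrow$), suppose $p \in J(S)$ is idempotent with $A \in p$. Let $A^\star = \{s \in A : s^{-1}A \in p\}$, which by the standard idempotent lemma satisfies $A^\star \in p$ and, for every $s \in A^\star$, $s^{-1}A^\star \in p$. Define $(m(F), \alpha(F), \tau(F))$ by recursion on $F \in \Pf(\calT)$, handling each $F$ after its finitely many proper subsets. At stage $F$ form
\[
B(F) = A^\star \cap \bigcap \bigl\{ w^{-1}A^\star : w = \prod_{i=1}^{k} x(m(G_i), \alpha(G_i), \tau(G_i), f_i),\ G_1 \subsetneq \cdots \subsetneq G_k \subsetneq F,\ f_i \in G_i \bigr\};
\]
an inductive check shows each such $w$ lies in $A^\star$, so $B(F)$ is a finite intersection of members of $p$, hence itself a $J$-set in $p$. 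Apply the $J$-set property to the shifted family $\{f^{(N)} : f \in F\}$ with $f^{(N)}(k) = f(k + N)$ and $N = \max_{F' \subsetneq F} \tau(F')(m(F'))$ to obtain witnesses whose first index exceeds $N$, producing $(m(F), \alpha(F), \tau(F))$ satisfying condition~(1). Condition~(2) then follows by induction on chain length: the final factor sits in $B(G_\ell) \subseteq w^{-1}A^\star \subseteq w^{-1}A$, where $w$ is the product of the earlier factors.

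For ($\Rightarrow$), given a $C$ set $A$ with witnesses $(m, \alpha, \tau)$, set
\[
B(F_0) = \bigl\{ \prod_{i=1}^k x(m(G_i), \alpha(G_i), \tau(G_i), f_i) : k \in \bbN,\ F_0 \subsetneq G_1 \subsetneq \cdots \subsetneq G_k,\ f_i \in G_i \bigr\}
\]
for each $F_0 \in \Pf(\calT)$; condition~(2) forces $B(F_0) \subseteq A$. Let $T = \{ p \in J(S) : B(F_0) \in p \text{ for every } F_0 \in \Pf(\calT) \}$. The plan is to show $T$ is a nonempty compact subsemigroup of $\beta S$; then \thmref{crts}(a) will supply an idempotent $p \in T \subseteq J(S)$ with $A \in p$. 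Nonemptiness follows from \thmref{uf-extend}(b), using $B(F_0) \cap B(F_0') \supseteq B(F_0 \cup F_0' \cup \{g\})$ (for $g \in \calT$ fresh) to get the finite intersection property and the single-link chain $G_1 = F_0 \cup F_1 \cup \{g\}$ to see that every $B(F_0)$ is itself a $J$-set (the witnesses for any prescribed $F_1$ being $(m(G_1), \alpha(G_1), \tau(G_1))$). Closedness of $T$ is immediate from $T = J(S) \cap \bigcap_{F_0} \overline{B(F_0)}$.

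The hard part is closure of $T$ under the semigroup operation. Given $p, q \in T$ and $F_0 \in \Pf(\calT)$, one must show $B(F_0) \in p \cdot q$, equivalently $\{ s \in S : s^{-1} B(F_0) \in q \} \in p$. The key observation is that every $s \in B(F_0)$ admits a representation $s = \prod_{i=1}^k x(m(G_i), \alpha(G_i), \tau(G_i), f_i)$ for some chain $F_0 \subsetneq G_1 \subsetneq \cdots \subsetneq G_k$, and concatenating with any further chain strictly above $G_k$ yields another element of $B(F_0)$; hence $s^{-1} B(F_0) \supseteq B(G_k) \in q$. Consequently $B(F_0) \subseteq \{ s : s^{-1} B(F_0) \in q \}$, and the latter set lies in $p$ because $B(F_0)$ does, completing the argument.
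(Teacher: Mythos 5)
Your argument is correct. Note that the paper itself gives no proof of this theorem---it defers entirely to \cite{john:2011} and to \cite[Theorem 3.8]{de:2008}---and what you have written is essentially the standard argument from those sources, transported to the paper's simplified definitions: the ($\Leftarrow$) direction is the usual $A^\star$-recursion over $\Pf(\calT)$ ordered by cardinality (with the index-shift $f^{(N)}$ securing condition~(1)), and the ($\Rightarrow$) direction builds the compact subsemigroup of ``tails'' $B(F_0)$ inside $J(S)$ and extracts an idempotent via \thmref{crts}. Two small points to tighten: \thmref{uf-extend}(b) as stated requires the family to be literally closed under finite intersections, so you should apply it to the collection of all finite intersections of the $B(F_0)$'s, each of which is a $J$ set because it contains $B(F_1 \cup \cdots \cup F_n \cup \{g\})$ and $J$ sets are closed under supersets; and the choice of a ``fresh'' $g \in \calT \setminus F_0$ (and the nonemptiness of each $B(F_0)$) uses that $\calT$ is infinite, which fails only in the trivial case $|S| = 1$, where the theorem is checked directly. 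With those remarks the proof is complete.
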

\begin{proof}
  This is proved in \cite{john:2011} for the new definition or \cite[Theorem 3.8]{de:2008} for the old definition.
\end{proof}

Using these facts and the general results in \secref{main-result} we end with the following dynamical characterization of $C$ sets.

\begin{thm}
  \label{thm:csets}
  Let $(S, \cdot)$ be a semigroup and $A \subseteq S$.
  Then $A$ is a $C$ set if and only if there exists a dynamical system $(X, \la T_s \ra_{s \in S})$ with points $x$ and $y$ in $X$ and there exists a neighborhood $U$ of $y$ such that $\{ s \in S : \mbox{$T_s(x) \in U$ and $T_s(y) \in U$} \}$ is a $J$ set and $A = \{ s \in S : T_s(x) \in U \}$.
\end{thm}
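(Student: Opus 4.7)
The plan is to derive the theorem as a specialization of \thmref{main-result} to the filter $\calK$ from \thmref{jset-filter}. Under that choice we have $\overline{\calK} = J(S)$, and unwinding the definition of $\calL(\calK)$ shows it equals the collection of $J$ sets. Consequently, joint $\calK$-recurrence of a pair $(x,y)$ says exactly that $\{s \in S : T_s(x) \in V \text{ and } T_s(y) \in V\}$ is a $J$ set for \emph{every} neighborhood $V$ of $y$. Combined with \thmref{cset-id}, which identifies the $C$ sets with the members of idempotents of $\overline{\calK}$, this positions each direction of the theorem as a consequence of the corresponding direction of \thmref{main-result}.

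For the forward direction, given that $A$ is a $C$ set I would first apply \thmref{cset-id} to fix an idempotent $p \in J(S)$ with $A \in p$, and then invoke \thmref{main-result} to obtain a dynamical system $(X, \la T_s \ra_{s \in S})$, points $x, y \in X$, and a neighborhood $U$ of $y$ such that $(x,y)$ is jointly $\calK$-recurrent and $A = \{s \in S : T_s(x) \in U\}$. Applying the joint-recurrence condition with $V = U$ in particular gives that $\{s \in S : T_s(x) \in U \text{ and } T_s(y) \in U\}$ is a $J$ set, which is exactly the conclusion required.

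For the backward direction, the goal is to produce an idempotent $r \in J(S)$ with $A \in r$, after which \thmref{cset-id} finishes. The natural route is to invoke \thmref{main-result} in reverse, but that requires joint $\calK$-recurrence at every neighborhood of $y$, whereas the hypothesis only supplies the property at the single neighborhood $U$. Since any neighborhood $V$ of $y$ contains $V \cap U$ and supersets of $J$ sets are $J$ sets, this shortfall reduces to showing that $\{s \in S : T_s(x) \in W \text{ and } T_s(y) \in W\}$ is a $J$ set for every neighborhood $W \subseteq U$ of $y$. That upgrade is the main obstacle; my plan is to handle it by applying \thmref{uf-extend}(b) to the filter base of such $B_W$'s, using that $B_U$ is a $J$ set and that $\overline{\calK}$ is a compact subsemigroup by \thmref{jset-filter}, and then following the argument of \lemref{technical}(a)$\Rightarrow$(c) to extract an idempotent $r \in J(S)$ with $T_r(x) = y = T_r(y)$. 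Once $r$ is in hand, \thmref{dyn-sc} delivers $A \in r$, completing the proof.
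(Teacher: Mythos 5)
Your overall strategy---specialize \thmref{main-result} to the filter $\calK$ of \thmref{jset-filter}, identify $\calL(\calK)$ with the collection of $J$ sets, and translate via \thmref{cset-id}---is exactly the paper's, and your forward direction is correct: joint $\calK$-recurrence gives the $J$-set property at every neighborhood of $y$, so in particular at $U$.

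The backward direction is where the proposal breaks down. You correctly observed that the stated hypothesis supplies the $J$-set property only at the single neighborhood $U$, whereas the reverse implication of \thmref{main-result} requires joint $\calK$-recurrence, i.e., the property at every neighborhood of $y$. But your proposed bridge is circular: \thmref{uf-extend}(b) applies only to a family $\calB \subseteq \calL(\calK)$, so to use it on the sets $B_W$ for neighborhoods $W \subseteq U$ you must already know that each $B_W$ is a $J$ set---which is precisely the upgrade you identified as the obstacle. Knowing that $B_U$ is a $J$ set gives no control over the smaller sets $B_W$, since subsets of $J$ sets need not be $J$ sets. In fact no bridge exists, because the single-neighborhood hypothesis genuinely fails to imply that $A$ is a $C$ set. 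Take $S = (\bbN, +)$, $X = \{0,1\}^{\bbN \cup \{0\}}$ with the shifts $T_s(f) = f(\,\cdot + s)$, let $x$ be the characteristic function of the odd numbers, let $y$ be the constant function $1$ (a fixed point of every $T_s$), and let $U = \{ w \in X : w(0) = 1 \}$. Then $\{ s \in S : \mbox{$T_s(x) \in U$ and $T_s(y) \in U$} \} = \{ s \in S : T_s(x) \in U \}$ is the set of odd positive integers, which is a $J$ set (it is syndetic; alternatively, a pigeonhole argument on the parities of partial sums of the given sequences verifies the definition directly), yet it is not an IP set and hence is a member of no idempotent, so it is not a $C$ set. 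The upshot is that the recurrence hypothesis must be read as full joint $\calK$-recurrence---equivalently, $\{ s \in S : \mbox{$T_s(x) \in V$ and $T_s(y) \in V$} \}$ is a $J$ set for \emph{every} neighborhood $V$ of $y$---which is what the paper's one-line appeal to \thmref{main-result} tacitly presupposes; under that reading both directions are immediate specializations and the obstacle you were trying to overcome disappears.
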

\begin{proof}
  Let $\calK = \{ B \subseteq S : \mbox{$S \setminus B$ is not a $J$ set} \}$ and note by \thmref{jset-filter} that $\overline{\calK} = J(S)$ and $\calL(\calK) = \{ A \subseteq S : \mbox{$A$ is a $J$ set} \}$.
  Since \thmref{cset-id} characterizes $C$ sets in terms of idempotents in $\overline{\calK}$, we can apply \thmref{main-result} to prove our statement.
\end{proof}

\bibliographystyle{amsplain}
\bibliography{references}  

\providecommand{\bysame}{\leavevmode\hbox to3em{\hrulefill}\thinspace}
\providecommand{\MR}{\relax\ifhmode\unskip\space\fi MR }
\providecommand{\MRhref}[2]{%
  \href{http://www.ams.org/mathscinet-getitem?mr=#1}{#2}
}
\providecommand{\href}[2]{#2}
\begin{thebibliography}{10}

\bibitem{bergelson:1990}
Vitaly Bergelson and Neil Hindman, \emph{{Nonmetrizable topological dynamics
  and Ramsey theory}}, Transactions of the American Mathematical Society
  \textbf{320} (1990), no.~1, 293--320.

\bibitem{burns:2007}
Shea~D. Burns and Neil Hindman, \emph{Quasi-central sets and their dynamical
  characterization}, Topology Proceedings \textbf{31} (2007), no.~2, 445--455.

\bibitem{davenport:1990}
Dennis Davenport, \emph{{The minimal ideal of compact subsemigroups of $\beta
  S$}}, Semigroup Forum \textbf{41} (1990), 201--213.

\bibitem{de:2008}
Dibyendu De, Neil Hindman, and Dona Strauss, \emph{A new and stronger central
  sets theorem}, Fundamenta Mathematicae \textbf{199} (2008), 155--175.

\bibitem{furstenberg:1981}
H.~Furstenberg, \emph{{Recurrence in Ergodic Theory and Combinatorial Number
  Theory}}, Princeton University Press, 1981.

\bibitem{hindman:2009}
Neil Hindman, \emph{{Small sets satisfying the Central Sets Theorem}}, Integers
  \textbf{9 Supplement} (2009), article 5.

\bibitem{hindman:1996}
Neil Hindman, Amir Maleki, and Dona Strauss, \emph{{Central sets and their
  combinatorial characterization}}, Journal of Combinatorial Theory (Series A)
  \textbf{74} (1996), 188--208.

\bibitem{hindman:1998}
Neil Hindman and Dona Strauss, \emph{{Algebra in the Stone-\v{C}ech
  Compactification}}, De Gruyter expositions in mathematics, no.~27, Walter de
  Gruyter, 1998.

\bibitem{hindman:2010}
\bysame, \emph{{Cartesian products of sets satisfying the Central Sets
  Theorem}}, Topology Proceedings \textbf{35} (2010), 203--223.

\bibitem{john:2011}
John~H. Johnson, \emph{{A new and simpler Central Sets Theorem}}, \textsl{(in
  preparation)}, November 2011.

\bibitem{shiyang:1996}
Hong ting Shi and Hong wei Yang, \emph{{Nonmetrizable topological dynamical
  characterization of central sets}}, Fundamenta Mathematicae (1996), no.~150,
  1--9.

\end{thebibliography}
\end{document}